\newcommand{\R}{\mathbb{R}} 
\newcommand{\state}{z} 
\newcommand{\spart}{y} 
\newcommand{\scstate}{x} 
\newcommand{\traj}{\zeta} 
\newcommand{\ctrl}{u} 
\newcommand{\cset}{\mathcal{U}}
\newcommand{\cfset}{\mathbb{U}}
\newcommand{\genset}{\mathcal{S}}
\newcommand{\xset}{\mathcal{X}}
\newcommand{\zset}{\mathcal{Z}}
\newcommand{\proj}{\text{proj}}
\newcommand{\bp}{\proj^{-1}}
\newcommand{\fdyn}{f} 
\newcommand{\scdyn}{g} 
\newcommand{\sctraj}{\xi}
\newcommand{\valfunc}{V} 
\newcommand{\fc}{l} 
\newcommand{\costate}{p}
\newcommand{\ham}{H} 
\newcommand{\brs}{\mathcal{V}} 
\newcommand{\targetset}{\mathcal{L}} 
\newcommand{\pos}{p} 
\newcommand{\vel}{v} 
\newtheorem{alg}{Algorithm}
\newtheorem{defn}{Definition}
\newtheorem{thm}{Theorem}
\newtheorem{lem}{Lemma}
\newtheorem{cor}{Corollary}
\newtheorem{IEEEproof}{Proof}
\title{\LARGE \bf Exact and Efficient Hamilton-Jacobi-based Guaranteed Safety Analysis via System Decomposition}
\author{Mo Chen, Sylvia Herbert, Claire J. Tomlin
\thanks{This work has been supported in part by NSF under CPS:ActionWebs (CNS-931843), by ONR under the HUNT (N0014-08-0696) and SMARTS (N00014-09-1-1051) MURIs and by grant N00014-12-1-0609, by AFOSR under the CHASE MURI (FA9550-10-1-0567). The research of M. Chen has received funding from the ``NSERC PGS-D'' Program.}
\thanks{All authors are with the Department of Electrical Engineering and Computer Sciences, University of California, Berkeley. \{mochen72, sylvia.herbert, tomlin\}@berkeley.edu}
}
\begin{document}
\maketitle
\thispagestyle{empty}
\pagestyle{empty}

\begin{abstract}
Hamilton-Jacobi (HJ) reachability is a method that provides rigorous analyses of the safety properties of dynamical systems. This method has been successfully applied to many low-dimensional dynamical system models such as coarse models of aircraft and quadrotors in order to provide safety guarantees in potentially dangerous scenarios. These guarantees can be provided by the computation of a backward reachable set (BRS), which represents the set of states from which the system may be driven into violating safety properties despite the system's best effort to remain safe. Unfortunately, HJ reachability is not practical for high-dimensional systems because the complexity of the BRS computation scales exponentially with the number of state dimensions. Although numerous approximation techniques are able to tractably provide conservative estimates of the BRS, they often require restrictive assumptions about system dynamics without providing an exact solution. In this paper we propose a general method for decomposing dynamical systems. Even when the resulting subsystems are coupled, relatively high-dimensional BRSs that were previously intractable or expensive to compute can now be quickly and exactly computed in lower-dimensional subspaces. As a result, the curse of dimensionality is alleviated to a large degree without sacrificing optimality. We demonstrate our theoretical results through two numerical examples: a 3D Dubins Car model and a 6D Acrobatic Quadrotor model.
\end{abstract}

\section{Introduction}
As the presence of safety-critical systems in everyday life has grown, so has the importance for the verification of these systems. Within the next decade we expect to see a rapid increase in the use of safety-critical systems such as autonomous cars, unmanned aerial vehicles, and other robots. Given the number and density of autonomous systems expected in civilian space, higher-fidelity models are needed to more accurately characterize these systems so that safety can be guaranteed. In addition, analysis of higher-dimensional dynamical system models has the potential to provide valuable insight into the behavior of system states that are frequently ignored to keep the system dimensionality low. Thus, tractable verification tools that are not overly conservative are urgently needed.

Optimal control and differential game theory are powerful tools for the verification of non-linear systems due to their flexibility with respect to system dynamics, treatment of unknown disturbances, and guaranteed optimality \cite{Barron90, Mitchell05, Margellos11, Bokanowski11}. Reachability analysis is core to these methods; here, the goal is to compute the backward reachable set (BRS), defined as the set of states from which the system can be driven into some unsafe set despite using the optimal control to avoid the unsafe set. Hamilton-Jacobi (HJ) reachability has been successfully used to guarantee safety for low-dimensional systems in application such as pair-wise collision avoidance \cite{Mitchell05}, automated aerial refueling \cite{Ding08}, and many others \cite{Vaisbord88, Mitchell02}. HJ reachability theory is also very convenient to use due to the many numerical tools available to obtain optimal solutions \cite{Sethian96, Osher02, Mitchell09}.

 Despite these advantages, HJ reachability can be impractical for many high-dimensional systems due to issues with scaling. HJ reachability-based methods involve solving a partial differential equation (PDE) or variational inequality on a grid representing a numerical discretization of the state space. As a result, the computation complexity scales exponentially with the system dimension. Application of current formulations of HJ reachability is limited to systems with approximately five dimensions or fewer, making the verification of most high-dimensional system models intractable.

For the analysis of high-dimensional systems, a number of approximation techniques exist. Unfortunately, these techniques usually place strong assumptions on system dynamics, such as requiring a polynomial form \cite{Majumdar13, Dreossi16}, a linear form \cite{Kurzhanski00, Maidens13}, or a Hamiltonian that is only dependent on the control variable \cite{Darbon16}. Other methods that are less restrictive in terms of system dynamics include \cite{Mitchell03}, which works with projections, and \cite{Chen16b}, which involves treating system states as disturbances. In all of the methods mentioned so far, varying degrees of approximation or conservatism is introduced. Under some special scenarios such as those outlined in \cite{Mitchell11} or \cite{Fisac15}, a small dimensionality reduction may be possible when obtaining exact optimal solutions.

The previous methods either are forced to trade off between optimality and computation complexity or provide only a small dimensionality reduction. In contrast, this paper presents the \textit{self-contained subsystem (SCS)} formulation for computing \textit{exact, optimal solutions} of systems with dynamics while drastically reducing dimensionality. Motivated by the need to provide safety guarantees, we compute BRSs in lower-dimensional subspaces of the full system state space, and then combine these low-dimensional BRSs to exactly construct the full-dimensional BRS. The full-dimensional BRS can be exactly constructed through back projections of the lower-dimensional BRSs \textit{even with coupling between the different subsystems}. Furthermore, the theory we present in this paper is compatible with any other method such as \cite{Chen16b} and \cite{Mitchell11}. When different methods are combined together, even more substantial dimensionality reduction can be achieved.

This paper will be presented as follows:
\begin{itemize}
\item First, in Sections \ref{sec:background} and \ref{sec:formulation} we introduce the HJ reachability theory relevant to our paper, and all the definitions needed for our proposed HJ-based system decomposition.
\item Next, in Section \ref{sec:sc} we present the SCS formulation, our main theoretical result. We describe how BRSs in lower-dimensional subspaces can be combined to construct the full-dimensional BRS exactly.
\item Finally, in Section \ref{sec:hd} we present two numerical examples: a low-dimensional 3D Dubins Car example to validate our theory and a high-dimensional 6D Acrobatic Quadrotor example that was previously intractable using standard methods.
\end{itemize}

\section{Background \label{sec:background}}
There are several HJ formulations that can compute BRSs exactly when the system dimensionality is low. Although these methods have been successfully used for lower-dimensional systems, they become intractable when the system dimension is greater than approximately five. In this section, we give a brief overview to provide a starting point on which we build the new proposed theory.

\subsection{Full System Dynamics}
\begin{defn}
\textbf{Full system}. Let $\state$ be the state variable of the system under consideration. We call this system the ``full system," or just ``system" for short. The evolution of the state of the full system satisfies the ordinary differential equation (ODE)
\begin{equation}
\begin{aligned}
\label{eq:fdyn}
\frac{d\state}{ds} = \dot\state = \fdyn(\state, \ctrl), s \in [t, 0] \\
\state \in \zset, \ctrl \in \cset
\end{aligned}
\end{equation}
\end{defn}

For clarity, we assume that the state space $\zset$ is $\R^n$, but our theory also applies to systems with periodic state dimensions such as angles. The control is denoted by $\ctrl$, with the control function $\ctrl(\cdot)$ being drawn from the set of measurable functions\footnote{A function $f:X\to Y$ between two measurable spaces $(X,\Sigma_X)$ and $(Y,\Sigma_Y)$ is said to be measurable if the preimage of a measurable set in $Y$ is a measurable set in $X$, that is: $\forall V\in\Sigma_Y, f^{-1}(V)\in\Sigma_X$, with $\Sigma_X,\Sigma_Y$ $\sigma$-algebras on $X$,$Y$.}:
\vspace{-0.1in}
\begin{equation}
\ctrl(\cdot) \in \cfset(t) = \{\phi: [t, 0] \rightarrow \cset: \phi(\cdot) \text{ is measurable}\}
\end{equation}

The system dynamics $\fdyn: \zset \times \cset \rightarrow \zset$ is assumed to be uniformly continuous, bounded, and Lipschitz continuous in $\state$ for fixed $\ctrl$. With this assumption, given $\ctrl(\cdot) \in \cfset$, there exists a unique trajectory solving \eqref{eq:fdyn} \cite{Evans84, Coddington55}. 

We will denote solutions, or trajectories of \eqref{eq:fdyn} starting from some state $\state$ at time $t$ under control $\ctrl(\cdot)$ as $\traj(s; \state, t, \ctrl(\cdot))$. The system trajectory satisfies an initial condition and the ODE \eqref{eq:fdyn} almost everywhere:
\begin{equation}
\label{eq:fdyn_traj}
\begin{aligned}
\frac{d}{ds}\traj(s; \state, t, \ctrl(\cdot)) &= \fdyn(\traj(s; \state, t, \ctrl(\cdot)), \ctrl(s)) \\
\traj(t; \state, t, \ctrl(\cdot)) &= \state
\end{aligned}
\end{equation}

\subsection{Backward Reachable Set}
\label{sec:RSRT}
In this paper, we consider a common definition of the BRS relevant for guaranteeing safety. Intuitively, the BRS represents the set of states $\state$ from which the system can be driven into an unsafe set $\targetset$ at a particular time. For our definition of BRS, we stipulate that the system be driven to $\targetset$ for all control functions $\ctrl(\cdot)$. In this case, the unsafe set can often be interpreted as a set of states to be avoided (such as an obstacle), and the BRS represents the set of states that leads to the system entering the unsafe set despite all possible control functions. We now formally define the BRS.

\begin{defn}
\label{defn:brs}
\textbf{Backward reachable set}. We denote the BRS $\brs(t)$, and define it as follows:
\begin{equation}
\label{eq:rset_avoid}
\brs(t) = \{\state \in \zset: \forall \ctrl(\cdot) \in \cfset, \traj(0; \state, t, \ctrl(\cdot)) \in \targetset \}
\end{equation}
\end{defn}

\subsection{The Full Formulation for Computing the BRS}
There are various similar HJ formulations such as \cite{Barron90, Mitchell05, Bokanowski11}, and \cite{Varaiya67} that cast the reachability problem as an optimal control problem and directly compute the BRS in the full state space of the system. These numerical solutions to the optimal control problem involve solving an HJ PDE on a grid that represents a discretization of the state space. Although these methods are not scalable beyond relatively low-dimensional systems, they form the foundation on which we will build our theory. We now briefly summarize the necessary details related to the HJ PDEs, and what their solutions represent in terms of the cost function and value function of the corresponding optimal control problem.

Let the unsafe set $\targetset \subseteq \zset$ be represented by the implicit surface function $\fc(\state)$ such that the unsafe set is the zero sub-level set of the implicit surface function: $\targetset = \{\state \in \zset: \fc(\state) \le 0\}$. Such a function always exists since we can choose $\fc(\cdot)$ to be the signed distance function from $\targetset$. Examples of implicit surface functions are shown as colored surfaces in Fig. \ref{fig:implicit_sf}, with the boundary of the corresponding sets they represent shown in black.

\begin{figure}
	\centering
	\includegraphics[width=0.47\textwidth]{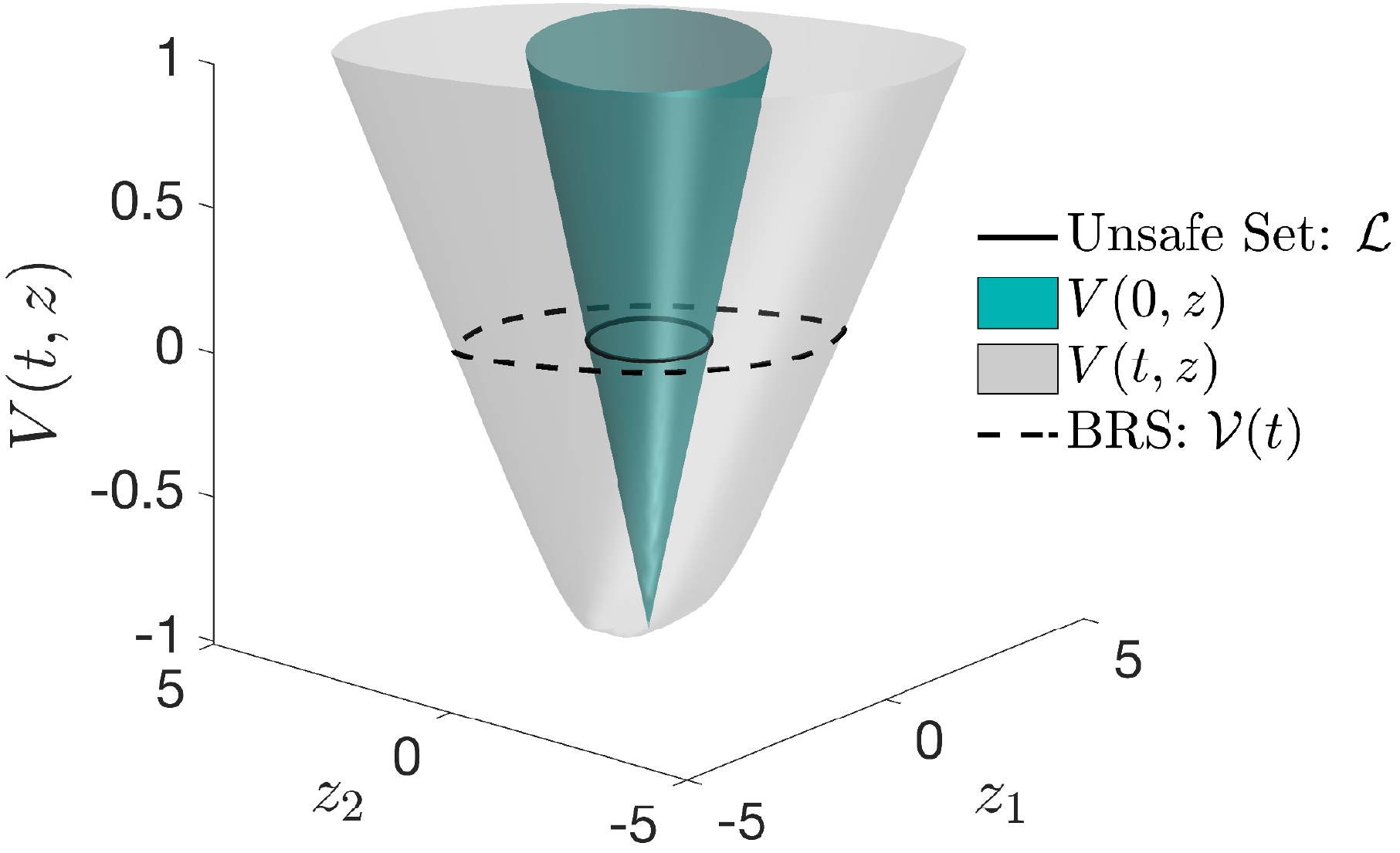}
	\caption{A simple 2D example illustrating HJ reachability. The boundary of the unsafe set $\targetset$ in the state space is shown as the solid black line. The blue surface represents the implicit surface function $\fc(\state)$ of the unsafe set, which by \eqref{eq:set_hj} is equivalent to $\valfunc(0, \state)$. The light gray surface shows the value function at some $t<0$: $\valfunc(t, \state)$. The corresponding BRS $\brs(t)$ is the zero sub-level set of this function; the boundary of $\brs(t)$ is seen here as the dashed black line. If the system remains outside of the BRS at $t<0$, it is guaranteed to not enter the unsafe set at $t=0$.}
	\label{fig:implicit_sf}
\end{figure}

Consider the optimal control problem given by

\begin{equation}
\label{eq:oc}
\begin{aligned}
&\valfunc(t, \state) = \max_{\ctrl(\cdot)\in\cfset} \fc(\traj(0; \state, t, \ctrl(\cdot))) \\
&\qquad \text{subject to \eqref{eq:fdyn_traj}}
\end{aligned}
\end{equation}

\noindent with the optimal control being given by
\vspace{-0.1in}
\begin{equation}
\label{eq:aoc}
\ctrl^*(\cdot) = \arg \max_{\ctrl(\cdot)\in\cfset} \fc(\traj(0; \state, t, \ctrl(\cdot)))
\end{equation}

It is well-known that the value function $\valfunc(t, \state)$ is the implicit surface function representing $\brs(t)$: $\brs(t) = \{\state \in \zset: \valfunc(t, \state) \le 0 \}$.

The value function $\valfunc(t,\state)$ is the viscosity solution \cite{Crandall83, Crandall84} of the HJ PDE
\vspace{-0.05in}
\begin{equation}
\label{eq:set_hj}
\begin{aligned}
D_s \valfunc(s, \state) + \ham(\state, \nabla\valfunc(s, \state)) &= 0, \quad s \in [t, 0] \\
\valfunc(0, \state) &= \fc(\state)
\end{aligned}
\end{equation}

The Hamiltonian in \eqref{eq:set_hj} is given by

\begin{equation}
\label{eq:ham}
\ham(\state, \costate) = \max_{\ctrl\in\cset}\costate \cdot \fdyn(\state, \ctrl)
\end{equation}

Fig. \ref{fig:implicit_sf} shows an illustration of HJ reachability. $\fc(\state)$, the implicit surface function representing $\targetset$, and the value function $\valfunc(t,\state)$, the implicit surface function representing the BRS $\brs(t)$, are shown as the blue and light gray surfaces respectively. The unsafe set $\targetset$ and the BRS $\brs(t)$ are the  zero sub-level sets of these two surface functions; the boundaries of $\targetset$ and $\brs(t)$ are shown in black. Once the value function $\valfunc$ is computed, the optimal control \eqref{eq:aoc} can be obtained by the expression

\begin{equation}
\ctrl^*(s) = \arg \max_{\ctrl\in\cset} \nabla \valfunc(s, \state) \cdot \fdyn(\state, \ctrl)
\end{equation}

We state the following algorithm for clarity and convenience:

\begin{alg}
\textbf{Full formulation}. Given an unsafe set $\targetset$ and dynamics \eqref{eq:fdyn}, the full formulation for computing the BRS is given by the following algorithm:
\begin{enumerate}
\item Define the implicit surface function $\fc(\state)$.
\item Solve \eqref{eq:set_hj} with Hamiltonian \eqref{eq:ham} to obtain $\valfunc(t, \state)$, the implicit surface function representing $\brs(t)$.
\end{enumerate}
\end{alg}

\section{Problem Formulation \label{sec:formulation}}
In this paper, we seek to obtain the BRS in Definition \ref{defn:brs} via computations in a lower-dimensional subspace under the assumption that the system \eqref{eq:fdyn} can be decomposed into SCSs. Such a decomposition can be commonly found, since many systems involve components that are loosely coupled. In particular, in the dynamics of many vehicles, the evolution of the position variables is often weakly coupled though other variables such as heading.

We now proceed with some essential definitions required to precisely state our main results.
\vspace{-0.05in}
\subsection{Definitions}
\subsubsection{\textbf{Subsystem Dynamics}}
Let the system $\state \in \zset = \R^n$ be partitioned as follows:
\vspace{-0.1in}
\begin{equation}
\begin{aligned}
&\state = (\spart_1, \spart_2, \spart_3) \\
&\spart_1 \in \R^{n_1}, \spart_2 \in \R^{n_2}, \spart_3 \in \R^{n_3} \\
&n_1, n_2 > 0, n_3 \ge 0 \\
\end{aligned}
\end{equation}

Note that $n_3$ could be zero, and $n_1 + n_2 + n_3 = n$. We call the variables $\spart_i$ the ``state partitions'', or just ``partitions'', of the system.

Define the SCS states $\scstate_1 \in \xset_1 = \R^{n_1+n_3}, \scstate_2 \in \xset_2 = \R^{n_2+n_3}$ as follows:
\begin{equation}
\label{eq:scstates}
\begin{aligned}
\scstate_1 &= (\spart_1, \spart_3) \\
\scstate_2 &= (\spart_2, \spart_3) \\
\end{aligned}
\end{equation}

It is important to note that $\scstate_1$ and $\scstate_2$ in general have overlapping states in the partition $\spart_3$. Note that our theory is applicable to any finite number of subsystems defined in the analogous way; however, for clarity and without loss of generality, in this paper we will assume that there are two subsystems.

For convenience, we have assumed that $\xset_1 = \R^{n_1+n_3}, \xset_2 = \R^{n_2+n_3}$, but as previously mentioned, our theory also applies to systems with periodic state dimensions.

\begin{defn}
\textbf{Self-contained subsystem}. We call each of the systems with states $\scstate_i$ evolving according to \eqref{eq:scdyn} a ``self-contained subsystem'' (SCS), or just "subsystem" for short.
\vspace{-0.1in}
\begin{equation}
\label{eq:scdyn}
\begin{aligned}
\frac{d\scstate_1}{ds} = \dot\scstate_1 &= \scdyn_1(\scstate_1, \ctrl_1) = \scdyn_1(\spart_1, \spart_3, \ctrl_1), \quad s \in [t, 0] \\
\frac{d\scstate_2}{ds} = \dot\scstate_2 &= \scdyn_2(\scstate_2, \ctrl_2) = \scdyn_1(\spart_2, \spart_3, \ctrl_2) \\
\ctrl_1 \in \cset_1, &\ctrl_2 \in \cset_2
\end{aligned}
\end{equation}
\end{defn}

Intuitively \eqref{eq:scdyn} means that the evolution of states in each subsystem depend only on the states in that subsystem: for example, the evolution of $\scstate_1$ depends only on the states in $\scstate_1$. However, the two subsystems are coupled through the state partition $\spart_3$. Note that the subsystem controls $\ctrl_1$ and $\ctrl_2$ depend on how the control inputs appear in subsystem states $\scstate_1$ and $\scstate_2$, and may not exist in some subsystems. For example, consider the dynamics of a Dubins Car:

\begin{equation}
\label{eq:dubins_car}
\begin{aligned}
\left[ \begin{array}{c}
\dot\pos_x\\
\dot\pos_y \\
\dot\theta
\end{array} \right]
=
\left[
\begin{array}{c}
v \cos\theta \\
v \sin\theta \\
\omega
\end{array}\right]
\end{aligned}
\end{equation}

\noindent with state $\state = (\pos_x, \pos_y, \theta)$ and control $\ctrl = \omega$. The state partitions are $\spart_1 = \pos_x, \spart_2 = \pos_y, \spart_3 = \theta$. The subsystems $\scstate_i$ and the subsystem controls $\ctrl_i$ are
\begin{equation}
\label{eq:dubins_car_decomp}
\begin{aligned}
\dot{\scstate_1} = 
\left[ \begin{array}{c}
\dot{\spart_1}\\
\dot{\spart_3}
\end{array} \right]
=
\left[ \begin{array}{c}
\dot{\pos_x}\\
\dot{\theta}
\end{array} \right]
&=
\left[\begin{array}{c}
v \cos\theta\\
\omega \\
\end{array}\right]\\
\dot{\scstate_2} = 
\left[ \begin{array}{c}
\dot{\spart_2}\\
\dot{\spart_3}
\end{array} \right]
=
\left[ \begin{array}{c}
\dot{\pos_y} \\
\dot{\theta}
\end{array} \right]
&=
\left[\begin{array}{c}
v \sin\theta \\
\omega
\end{array}\right]\\
\ctrl_1 = \ctrl_2 = \omega &= \ctrl
\end{aligned}
\end{equation}

\noindent where the overlapping state is $\theta = \spart_3$.

The subsystem control signal spaces $\cset_1,\cset_2$ and control function spaces $\cfset_1, \cfset_2$ are defined appropriately according to the full system control signal and function spaces $\cset$ and $\cfset$ based on how the control enters the dynamics of the subsystems. For another example of a system decomposed into two self-contained subsystems, see \eqref{eq:quad6D} and \eqref{eq:quad6Dsc}.

Although there may be common or overlapping states in $\scstate_1$ and $\scstate_2$, the evolution of each subsystem does not depend on the other explicitly. In fact, if we for example entirely ignore the subsystem $\scstate_2$, the evolution of the subsystem $\scstate_1$ is well-defined and can be considered a full system on its own; hence, each subsystem is self-contained.

\subsubsection{\textbf{Projection Operators}}
Define the projection of a state $\state$ onto a subsystem state space $\xset_i$ as
\begin{equation}
\label{eq:project_pt}
\proj_{\xset_i}(\state) = \scstate_i, i = 1,2
\end{equation}

For convenience, we will define the projection operator applied on sets $\genset \subseteq \zset$:
\begin{equation}
\begin{aligned}
\label{eq:project_set}
\proj_{\xset_i}(\genset) &= \{\scstate_i \in \xset_i: \exists \state \in \genset, \proj_{\xset_i}(\state) = \scstate_i\} \\
\end{aligned}
\end{equation}

Since we will aim to relate the BRSs of the subsystems to the BRS of the full system, we also define the back projection operator as
\vspace{-0.1in}
\begin{equation}
\label{eq:backproject_pt}
\bp(\scstate_i) = \{\state \in \zset: \proj_{\xset_i}(\state) = \scstate_i\}
\end{equation}

We will also apply the back projection operator on sets. In this case, we abuse notation and define the back projection operator on some set $\genset_i \subseteq \xset_i$ as
\begin{equation}
\label{eq:backproject_set}
\bp(\genset_i) = \{\state \in \zset: \exists \scstate_i \in \genset_i, \proj_{\xset_i}(\state) = \scstate_i\}
\end{equation}

Fig. \ref{fig:proj_pt} and \ref{fig:proj_set} illustrate the definitions involving projections.

\begin{figure}[H]
	\centering
	\includegraphics[width=0.35\textwidth]{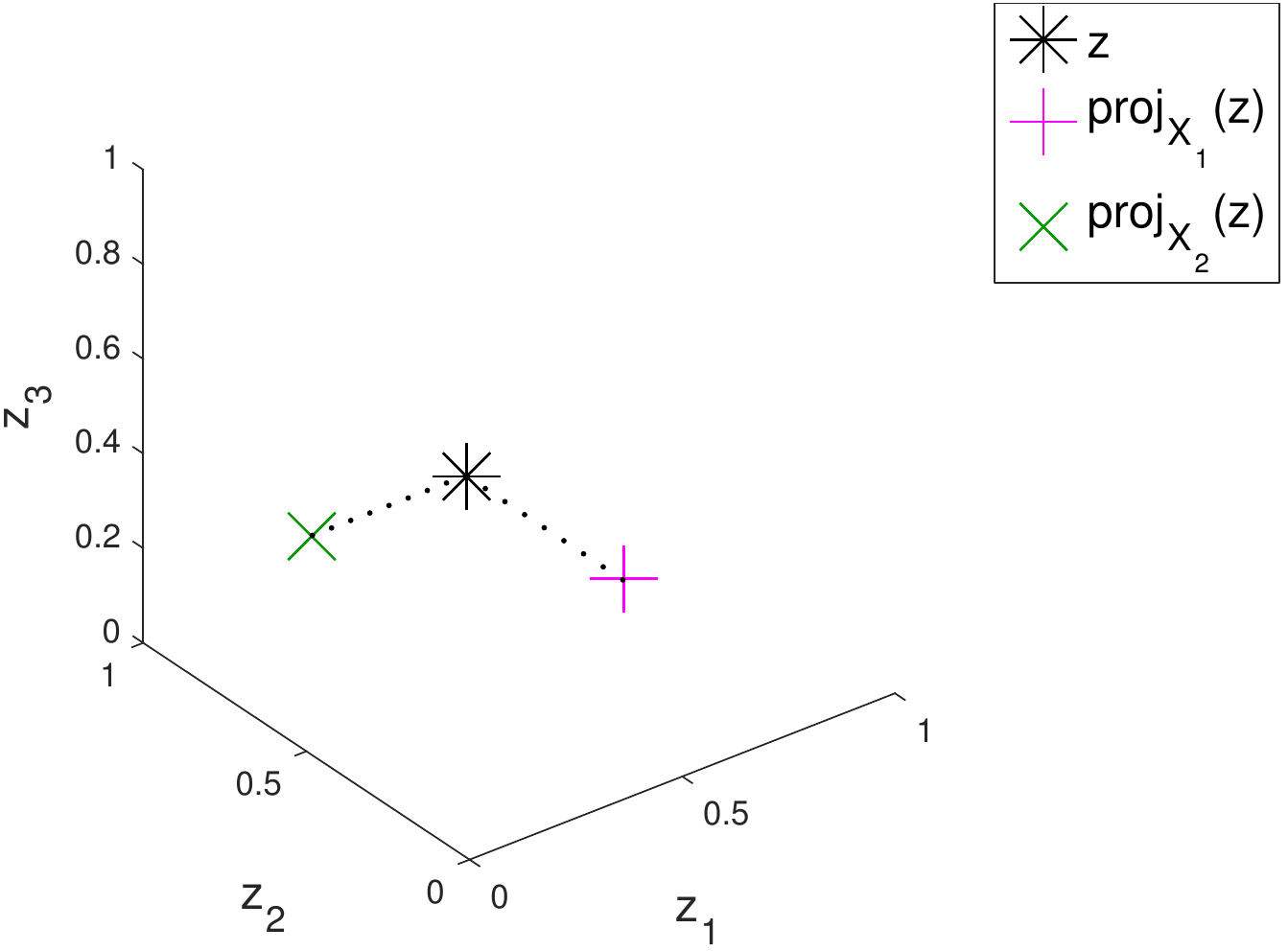}
	\caption{Projection of a point $\state$ onto the lower-dimensional subspaces in the $\state_2$-$\state_3$ plane and the $\state_1$-$\state_3$ plane.}
	\label{fig:proj_pt}
\end{figure}

\begin{figure}[H]
	\centering
	\includegraphics[width=0.5\textwidth]{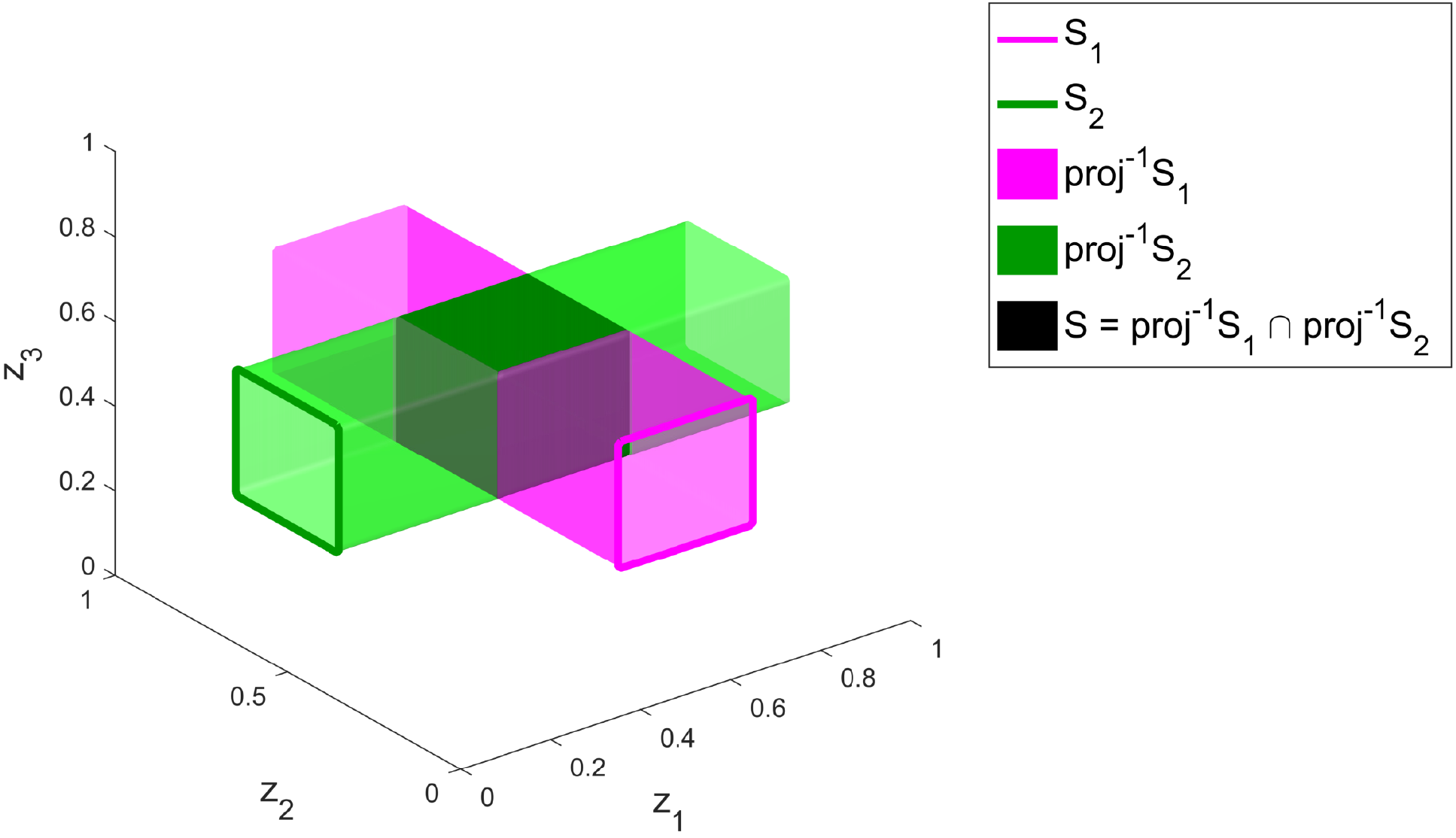}
	\caption{Back projection of sets in the $\state_2$-$\state_3$ plane and the $\state_1$-$\state_3$ plane into the 3D space.}
	\label{fig:proj_set}
\end{figure}

\subsubsection{\textbf{Subsystem Trajectories}}
Since each subsystem in \eqref{eq:scdyn} is self-contained, we can denote the subsystem trajectories $\sctraj_i(s; \scstate_i, t, \ctrl_i(\cdot))$. The subsystem trajectories satisfy the subsystem dynamics and initial condition:

\begin{equation}
\label{eq:scdyn_traj}
\begin{aligned}
\frac{d}{ds}\sctraj(s; \scstate_i, t, \ctrl_i(\cdot)) &= \scdyn_i(\sctraj(s; \scstate_i, t, \ctrl_i(\cdot)), \ctrl_i(s)) \\
\sctraj_i(t; \scstate_i, t, \ctrl_i(\cdot)) &= \scstate_i
\end{aligned}
\end{equation}

The full system trajectory and subsystem trajectories are simply related to each other via the projection operator:
\begin{equation}
\label{eq:proj_traj}
\proj_{\xset_i}(\traj(s; \state, t, \ctrl(\cdot)) = \sctraj_i(s; \scstate_i, t, \ctrl_i(\cdot))
\end{equation}

\noindent where $\scstate_i = \proj_{\xset_i}(\state)$.

\subsection{Goals of This Paper}
We assume that the full system unsafe set $\targetset$ can be written in terms of the subsystem unsafe sets $\targetset_{\scstate_1} \in \xset_1, \targetset_{\scstate_2} \in \xset_2$ in the way depicted in Fig. \ref{fig:proj_set}:

\begin{equation}
\label{eq:target_intersect}
\targetset = \bp(\targetset_{\scstate_1}) \cap \bp(\targetset_{\scstate_2})
\end{equation}

\noindent where the full unsafe set is the intersection of the back projections of subsystem unsafe sets. In practice, this is not a strong assumption since many obstacles can be accurately modeled as rectangular prisms in position space, or hyper-rectangles in the full state space. In fact, the unsafe set described by \eqref{eq:target_intersect} turns out to only be rectangular in the non-overlapping states, and can be arbitrarily shaped in the overlapping states. In addition, such an assumption is reasonable since the full-dimensional unsafe set should at least be representable in some way in the lower-dimensional spaces. However, in the worst case, taking $\targetset_{\scstate_i} = \proj_{\xset_i}(\targetset)$ always leads to a conservative approximation of the constructed BRS that over-approximates the true BRS. Also note that with the definition in \eqref{eq:target_intersect}, we have that $\proj_{\xset_i}(\targetset) = \targetset_{\xset_i}$.

Next, we define the subsystem BRSs $\brs_{\scstate_1}, \brs_{\scstate_2}$ the same way as in \eqref{eq:rset_avoid}, but with the subsystems in \eqref{eq:scdyn} and subsystem unsafe sets $\targetset_{\scstate_1}, \targetset_{\scstate_2}$, respectively:

\begin{equation}
\label{eq:sc_rset}
\brs_{\scstate_i}(t) = \{\scstate_i: \forall \ctrl_i(\cdot) \in \cfset_i, \sctraj_i(0; \scstate_i, t, \ctrl_i(\cdot)) \in \targetset_{\scstate_i} \} \\
\end{equation}

Given a system in the form of \eqref{eq:scdyn} with unsafe set that can be represented by \eqref{eq:target_intersect}, our goal is to compute the full-dimensional BRS by performing computations in the lower-dimensional subspaces. Specifically, we would like to first compute the subsystem BRSs $\brs_{\scstate_1}(t), \brs_{\scstate_2}(t)$, and then construct the full system BRS $\brs(t)$ exactly. This process dramatically reduces computation complexity by decomposing the higher-dimensional system into two lower-dimensional subsystems. Specifically, we will show that if the unsafe set can be decomposed in the way described by \eqref{eq:target_intersect}, then the full-dimensional BRS is decomposable in the same way:

\begin{equation}
\label{eq:recon}
\brs(t) = \bp(\brs_{\scstate_1}(t)) \cap \bp(\brs_{\scstate_2}(t))
\end{equation}

It is important to note that if the subsystem states $\scstate_1, \scstate_2$ have no overlapping states (and are therefore decoupled), the above statement is relatively intuitive and easy to show; however, when the subsystems have the overlapping states in the partition $\spart_3$, they are coupled to each other through these overlapping states. Our main result in this paper proves that despite this coupling, \eqref{eq:recon} still holds.

\section{Self-Contained Subsystems \label{sec:sc}}
With the background and definitions established, we now show the main result in a theorem, which relates lower-dimensional BRSs to the full-dimensional BRS we would like to compute. The consequence of the theorem is that for systems of the form \eqref{eq:scdyn}, one can obtain the \textit{exact} full-dimensional BRS by first computing the lower-dimensional BRSs $\brs_{\xset_i}$, and then constructing the full-dimensional BRS $\brs(t)$ via \eqref{eq:recon}. We first prove a lemma involving a key property of the projection operator.

\begin{lem}
\label{lem:proj_basic}
Let $\bar\state\in\zset, \bar\scstate_i = \proj_{\xset_i}(\bar\state), \genset_i \subseteq \xset_i$ for some subsystem $i$. Then,

\begin{equation}
\bar\scstate_i \in \genset_i \Leftrightarrow \bar\state \in \bp(\genset_i)
\end{equation}
\end{lem}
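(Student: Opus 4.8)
The plan is to prove the two implications separately, working directly from the definition of the back projection operator on sets in \eqref{eq:backproject_set} and using nothing beyond the fact that $\proj_{\xset_i}$ is a well-defined, single-valued map from $\zset$ to $\xset_i$.

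For the forward implication ($\Rightarrow$), I would assume $\bar\scstate_i \in \genset_i$ and simply exhibit $\bar\scstate_i$ itself as the witness demanded by \eqref{eq:backproject_set}. Since by hypothesis $\proj_{\xset_i}(\bar\state) = \bar\scstate_i$ and $\bar\scstate_i \in \genset_i$, the point $\bar\state$ satisfies the condition ``there exists $\scstate_i \in \genset_i$ such that $\proj_{\xset_i}(\bar\state) = \scstate_i$,'' and hence $\bar\state \in \bp(\genset_i)$ by definition.

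For the reverse implication ($\Leftarrow$), I would assume $\bar\state \in \bp(\genset_i)$, so by \eqref{eq:backproject_set} there exists some $\scstate_i \in \genset_i$ with $\proj_{\xset_i}(\bar\state) = \scstate_i$. The one (entirely routine) observation that closes the argument is that $\proj_{\xset_i}$ is a function, so $\proj_{\xset_i}(\bar\state)$ is uniquely determined and equals $\bar\scstate_i$ by the hypothesis $\bar\scstate_i = \proj_{\xset_i}(\bar\state)$. Therefore $\scstate_i = \bar\scstate_i$, and since $\scstate_i \in \genset_i$ we conclude $\bar\scstate_i \in \genset_i$.

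I do not anticipate any genuine obstacle: the lemma is essentially an unpacking of the definition of $\bp$ combined with well-definedness of the projection. The only point that requires a moment's care is in the reverse direction, where one must not confuse the set-valued back projection $\bp(\genset_i)$ with a point-valued inverse and must explicitly invoke uniqueness of the projected point $\proj_{\xset_i}(\bar\state)$; without that, the witness $\scstate_i$ produced by membership in $\bp(\genset_i)$ could a priori be a different element of $\genset_i$.
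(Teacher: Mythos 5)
Your proposal is correct and follows essentially the same route as the paper: the forward direction exhibits $\bar\scstate_i$ itself as the witness required by \eqref{eq:backproject_set}, and the reverse direction uses the single-valuedness of $\proj_{\xset_i}$ to identify the witness with $\bar\scstate_i$ (the paper phrases this last step as a short contradiction, but the content is identical). No gaps.
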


\begin{proof}
Forward direction: Suppose $\bar\scstate_i \in \genset_i$, then trivially $\exists \scstate_i \in \genset_i, \proj_{\xset_i}(\bar\state) = \scstate_i$ (the $\scstate_i$ that ``exists'' is just $\bar\scstate_i$ itself). By the definition of back projection in \eqref{eq:backproject_set}, we have $\bar\state \in \bp(\genset_i)$.

Backward direction: Suppose $\bar\state \in \bp(\genset_i)$, then by the definition of back projection in \eqref{eq:backproject_set}, we have $\exists \scstate_i \in \genset_i, \proj_{\xset_i}(\bar\state) = \scstate_i$.

Let such an $\scstate_i \in \genset_i$ be denoted $\hat\scstate_i$, and suppose $\bar\scstate_i \notin \genset_i$. Then, we must have $\hat\scstate_i \neq \bar\scstate_i$, which is a contradiction, since $\bar\scstate_i = \proj_{\xset_i}(\bar\state) = \hat\scstate_i$.
\end{proof}

\begin{cor}
\label{cor:proj}
If $\genset = \bp(\genset_1) \cap \bp(\genset_2)$, then 
\begin{equation}
\label{eq:projprop_intersect2}
\bar\state \in \genset \Leftrightarrow \forall i,\bar \scstate_i \in \genset_i\\
\end{equation}
\end{cor}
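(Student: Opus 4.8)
The plan is to derive this corollary directly from Lemma~\ref{lem:proj_basic} by unwinding the definition of set intersection and then applying the lemma once for each subsystem index. Concretely, I would start from the left-hand side: $\bar\state \in \genset$ means $\bar\state \in \bp(\genset_1) \cap \bp(\genset_2)$, which by the definition of intersection is equivalent to the conjunction $\bar\state \in \bp(\genset_1)$ and $\bar\state \in \bp(\genset_2)$.

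Next I would invoke Lemma~\ref{lem:proj_basic} separately with $i=1$ and with $i=2$, taking $\bar\state$ as the full state and $\bar\scstate_i = \proj_{\xset_i}(\bar\state)$ in each case. The lemma gives $\bar\state \in \bp(\genset_i) \Leftrightarrow \bar\scstate_i \in \genset_i$ for each $i$, so the conjunction above is equivalent to ``$\bar\scstate_1 \in \genset_1$ and $\bar\scstate_2 \in \genset_2$'', which is exactly the statement $\forall i, \bar\scstate_i \in \genset_i$ (recall we are working with two subsystems, though the same argument extends termwise to any finite number). Chaining these equivalences yields \eqref{eq:projprop_intersect2}.

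There is essentially no substantive obstacle here: the content is entirely in Lemma~\ref{lem:proj_basic}, and the corollary is a bookkeeping step that distributes that equivalence across a finite intersection. The only point requiring any care is making sure the $\bar\scstate_i$ appearing on the right-hand side are understood to be the projections $\proj_{\xset_i}(\bar\state)$ of the \emph{same} full state $\bar\state$, so that the hypotheses of the lemma are met for each $i$ simultaneously; once that is noted, the proof is a one-line chain of ``if and only if'' steps.
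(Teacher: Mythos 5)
Your proposal is correct and matches the paper's (implicit) reasoning exactly: the paper states Corollary~\ref{cor:proj} without a separate proof precisely because it follows by applying Lemma~\ref{lem:proj_basic} once per subsystem index and intersecting, which is the argument you give. No gaps; the remark that each $\bar\scstate_i$ must be the projection of the \emph{same} $\bar\state$ is the right point of care.
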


We now use Lemma \ref{lem:proj_basic} and Corollary \ref{cor:proj} to prove our main result.

\begin{thm}
\label{thm:sc_reach}
\textbf{System decomposition for computing the BRS}. Suppose that the full system in \eqref{eq:fdyn} can be decomposed into the form of \eqref{eq:scdyn}, then

\begin{equation} \label{eq:targetsetconstruction}
\begin{aligned}
&\targetset = \bp(\targetset_{\scstate_1}) \cap \bp(\targetset_{\scstate_2}) \\
&\Rightarrow \brs(t) = \bp(\brs_{\scstate_1}(t)) \cap \bp(\brs_{\scstate_2}(t))
\end{aligned}
\end{equation}
\end{thm}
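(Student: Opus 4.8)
The plan is to prove the two set inclusions $\brs(t) \subseteq \bp(\brs_{\scstate_1}(t)) \cap \bp(\brs_{\scstate_2}(t))$ and $\bp(\brs_{\scstate_1}(t)) \cap \bp(\brs_{\scstate_2}(t)) \subseteq \brs(t)$ separately, in both cases working pointwise on a state $\bar\state$ and translating set membership into trajectory conditions using Corollary~\ref{cor:proj} and the trajectory relation \eqref{eq:proj_traj}. Throughout, write $\bar\scstate_i = \proj_{\xset_i}(\bar\state)$. The key structural fact I will lean on is that for any full control $\ctrl(\cdot)$ the induced subsystem control $\ctrl_i(\cdot)$ is well-defined, and conversely every $\ctrl_i(\cdot) \in \cfset_i$ arises from some $\ctrl(\cdot) \in \cfset$ (this is how $\cfset_i$ was defined from $\cfset$); together with \eqref{eq:proj_traj} this lets me pass freely between $\traj(0;\bar\state,t,\ctrl(\cdot))$ and $\sctraj_i(0;\bar\scstate_i,t,\ctrl_i(\cdot))$.

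For the forward inclusion, suppose $\bar\state \in \brs(t)$, i.e.\ $\traj(0;\bar\state,t,\ctrl(\cdot)) \in \targetset$ for all $\ctrl(\cdot) \in \cfset$. Fix a subsystem $i$ and an arbitrary $\ctrl_i(\cdot) \in \cfset_i$; lift it to some $\ctrl(\cdot) \in \cfset$. Then $\traj(0;\bar\state,t,\ctrl(\cdot)) \in \targetset = \bp(\targetset_{\scstate_1}) \cap \bp(\targetset_{\scstate_2})$, so by Corollary~\ref{cor:proj} its projection onto $\xset_i$ lies in $\targetset_{\scstate_i}$; by \eqref{eq:proj_traj} that projection equals $\sctraj_i(0;\bar\scstate_i,t,\ctrl_i(\cdot))$. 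Since $\ctrl_i(\cdot)$ was arbitrary, $\bar\scstate_i \in \brs_{\scstate_i}(t)$, hence $\bar\state \in \bp(\brs_{\scstate_i}(t))$ by Lemma~\ref{lem:proj_basic}; doing this for $i=1,2$ gives the claim.

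For the reverse inclusion, suppose $\bar\state \in \bp(\brs_{\scstate_1}(t)) \cap \bp(\brs_{\scstate_2}(t))$, so by Lemma~\ref{lem:proj_basic} $\bar\scstate_i \in \brs_{\scstate_i}(t)$ for each $i$. Take any $\ctrl(\cdot) \in \cfset$ and let $\ctrl_i(\cdot)$ be its induced subsystem control. Then for each $i$, $\sctraj_i(0;\bar\scstate_i,t,\ctrl_i(\cdot)) \in \targetset_{\scstate_i}$, and by \eqref{eq:proj_traj} this is exactly $\proj_{\xset_i}(\traj(0;\bar\state,t,\ctrl(\cdot)))$. So the endpoint $\traj(0;\bar\state,t,\ctrl(\cdot))$ projects into $\targetset_{\scstate_i}$ for both $i$, and Corollary~\ref{cor:proj} (applied with $\genset = \targetset$, $\genset_i = \targetset_{\scstate_i}$, using \eqref{eq:target_intersect}) yields $\traj(0;\bar\state,t,\ctrl(\cdot)) \in \targetset$. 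As $\ctrl(\cdot)$ was arbitrary, $\bar\state \in \brs(t)$.

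The main obstacle — and the step that deserves the most care — is the handling of the controls in the overlapping coordinate $\spart_3$. Unlike the decoupled case, the two subsystems share $\spart_3$, so one must make sure that "lifting" a single subsystem control $\ctrl_i(\cdot)$ to a full control $\ctrl(\cdot)$ is legitimate and that no consistency condition between $\ctrl_1$ and $\ctrl_2$ is being silently assumed. The resolution is that in the forward direction the two subsystems are treated one at a time against a \emph{universally} quantified full control, while in the reverse direction a \emph{single} full control $\ctrl(\cdot)$ is fixed first and then projected to both $\ctrl_i(\cdot)$ simultaneously, so the shared components are automatically consistent; this asymmetry is exactly why the "for all controls" (avoid-type) definition of the BRS makes the decomposition go through, and it is worth spelling out explicitly rather than burying in the quantifier bookkeeping.
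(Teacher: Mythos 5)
Your proof is correct and follows essentially the same route as the paper's: both reduce membership in $\brs(t)$ to the trajectory-endpoint condition, pass to the subsystems via \eqref{eq:proj_traj} and Corollary~\ref{cor:proj}, and return to back projections via Lemma~\ref{lem:proj_basic}; the paper simply writes this as a single chain of equivalences rather than two inclusions. Your explicit treatment of lifting a subsystem control $\ctrl_i(\cdot)$ to a full control and, conversely, projecting a fixed full control to both subsystems simultaneously is a worthwhile elaboration of a quantifier step the paper's proof leaves implicit, but it is not a different argument.
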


\begin{IEEEproof}
We will prove Theorem \ref{thm:sc_reach} by proving the following equivalent statement:
\begin{equation}
\label{eq:proof_left}
\bar\state \in \brs(t) \Leftrightarrow \bar\state \in \bp(\brs_{\scstate_1}(t)) \cap \bp(\brs_{\scstate_2}(t))
\end{equation}

By the definition of BRS in \eqref{eq:rset_avoid}, we have
\begin{equation}
\bar\state \in \brs(t) \Leftrightarrow \forall \ctrl(\cdot) \in \cfset, \traj(0; \bar\state, t, \ctrl(\cdot)) \in \targetset
\end{equation}

Consider the property \eqref{eq:proj_traj}, and let 
\begin{equation}
\begin{aligned}
\bar\scstate_i &= \proj_{\xset_i}(\bar\state) \\
 \sctraj_i(0; \bar\scstate_i, t, \ctrl_i(\cdot)) &= \proj_{\xset_i}(\traj(0; \bar\state, t, \ctrl(\cdot)))
\end{aligned}
\end{equation}

Noting that $\targetset = \bp(\targetset_{\scstate_1}) \cap \bp(\targetset_{\scstate_2})$ and using Corollary \ref{cor:proj}, we have the following equivalent statement in terms of the subsystem trajectories:

\begin{equation}
\forall i, \forall \ctrl_i(\cdot), \sctraj_i(0; \bar\scstate_i, t, \ctrl_i(\cdot)) \in \targetset_{\scstate_i}
\end{equation}

\noindent which, by the definition of the subsystem BRS \eqref{eq:sc_rset}, is in turn equivalent to

\begin{equation}
\label{eq:equiv_statement}
\forall i, \bar\scstate_i \in \brs_{\scstate_i}(t)
\end{equation}

By Lemma \ref{lem:proj_basic}, this is equivalent to

\begin{equation}
\forall i, \bar\state \in \bp(\brs_{\scstate_i}(t))
\end{equation}
\end{IEEEproof}

With the above theorem, we now summarize our main theoretical result and its consequences with the following algorithm:
\begin{alg}
\textbf{SCS formulation}. Given an unsafe set $\targetset$ that can be decomposed as $\targetset = \bp(\targetset_{\scstate_1}) \cap \bp(\targetset_{\scstate_2})$ and SCSs with dynamics in the form \eqref{eq:scdyn}, the HJ-based SCS formulation for computing the BRS is given in the following algorithm:
\begin{enumerate}
\item Define the implicit surface functions representing the subsystem unsafe sets $\targetset_{\scstate_1}, \targetset_{\scstate_2}$.
\item Repeat for $i = 1, 2$: For $i$th SCS, compute its BRS by solving \eqref{eq:set_hj} in the space of $\xset_i$.
\item Construct the full-dimensional BRS as follows: $\brs(t) = \bp(\brs_{\scstate_1}(t)) \cap \bp(\brs_{\scstate_2}(t))$. By Theorem \ref{thm:sc_reach}, the full-dimensional BRS is exactly constructed.
\end{enumerate}
\end{alg}

\section{Numerical Examples \label{sec:hd}}
\begin{figure*}[h!]
	\centering
	\includegraphics[width=\textwidth]{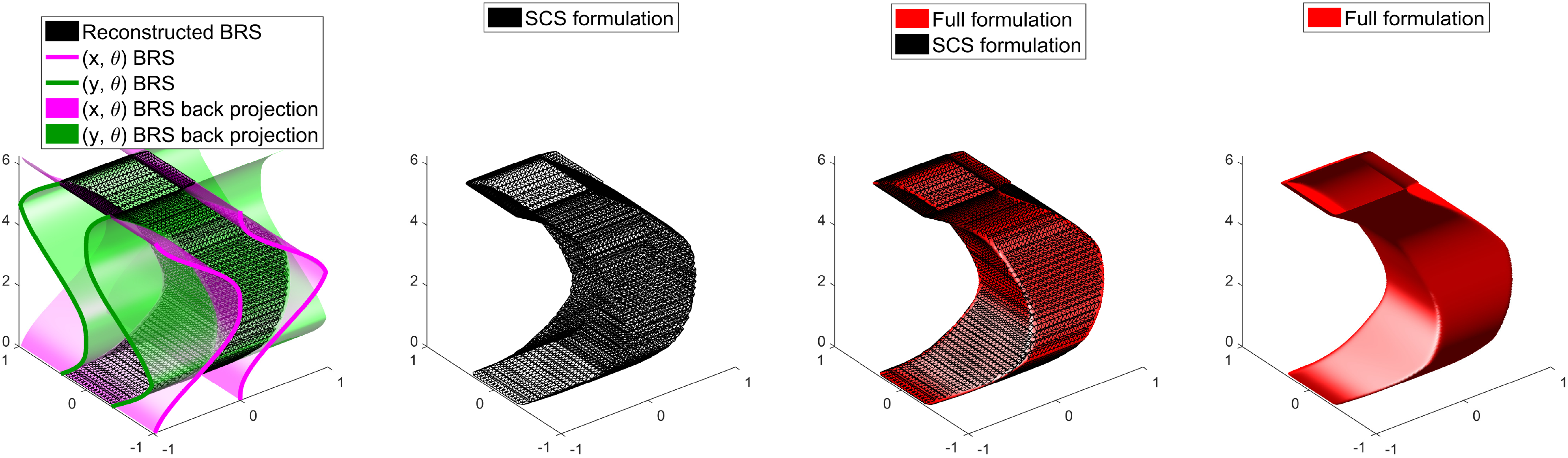}
	\caption{Comparison of the Dubins Car BRS $\brs(t), t=0.5$ computed using the full formulation and the SCS formulation. Left: BRSs in the lower-dimensional subspaces and how they are combined to form the full-dimensional BRS. Middle-left: BRS computed using the SCS formulation. Middle-right: BRSs computed using the full formulation and the BRS formulation superimposed on each other, showing that they are indistinguishable. Right: BRS computed using the full formulation.}
	\label{fig:dubins_compare}
	\vspace{-.2in}
\end{figure*}

We now present two numerical examples to illustrate our method. For each example, we present a common dynamical system that can be decomposed into the form of \eqref{eq:scdyn}. The first example, the 3D Dubins Car, illustrates that our decomposition method produces the exact full-dimensional BRS at a substantially lower computation cost. The second example, the 6D Acrobatic Quadrotor, demonstrates that our technique enables the exact computation of a BRS that was previously intractable to compute with the full formulation.

\subsection{Dubins Car}
The Dubins Car is a well-known system whose dynamics are given by \eqref{eq:dubins_car}. This system is only 3D, and its BRS can be tractably computed in the full-dimensional space, so we use it to compare the full formulation with the SCS formulation. 

\begin{figure}[h]
	\centering
	\includegraphics[width=0.45\textwidth]{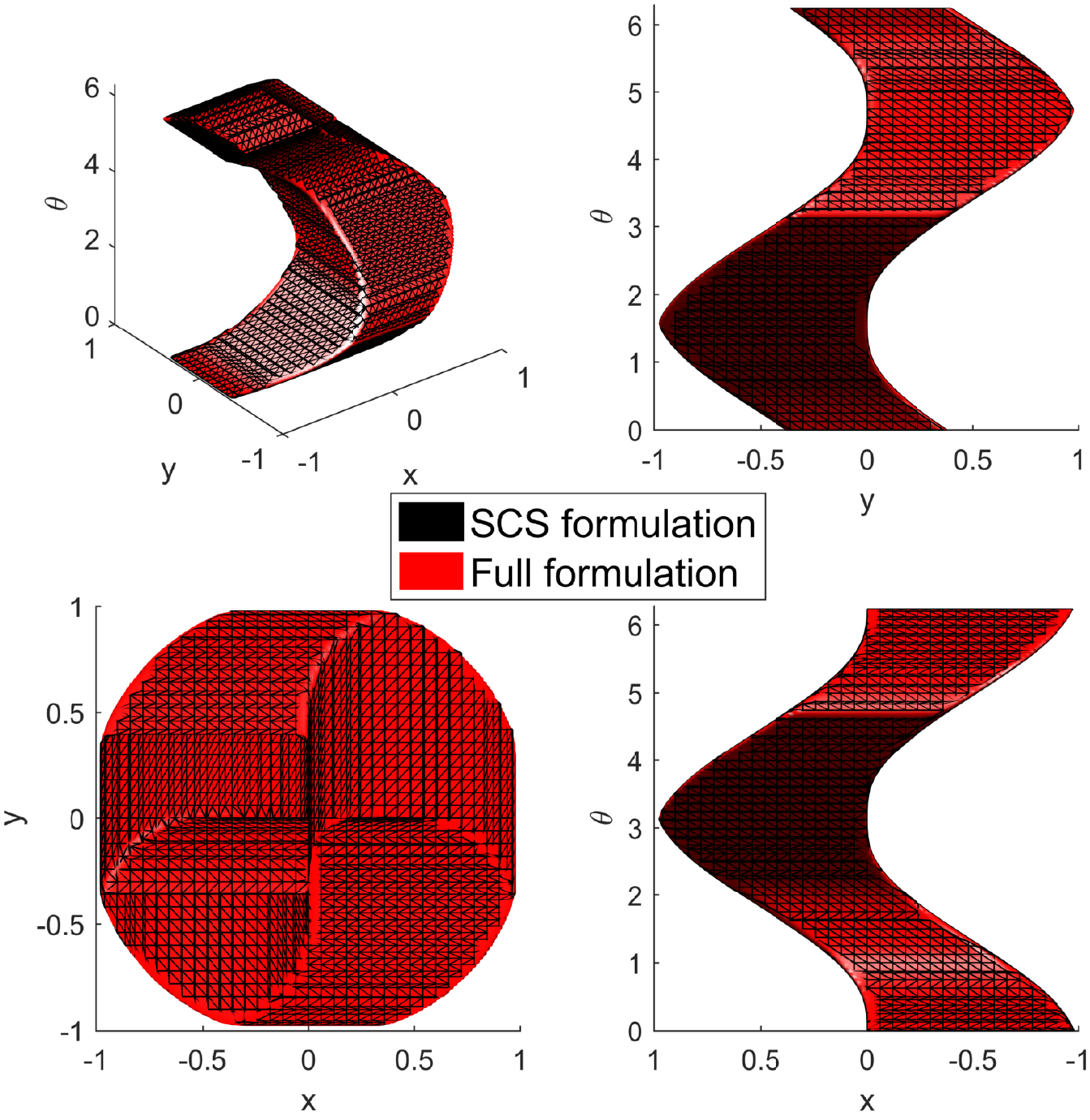}
	\caption{Comparison of the Dubins Car BRS $\brs(t), t=0.5$ computed using the full formulation and the SCS formulation, viewed at a few different angles.}
	\label{fig:dubins_angles}
\end{figure}

\begin{figure}[h]
	\centering
	\includegraphics[width=0.45\textwidth]{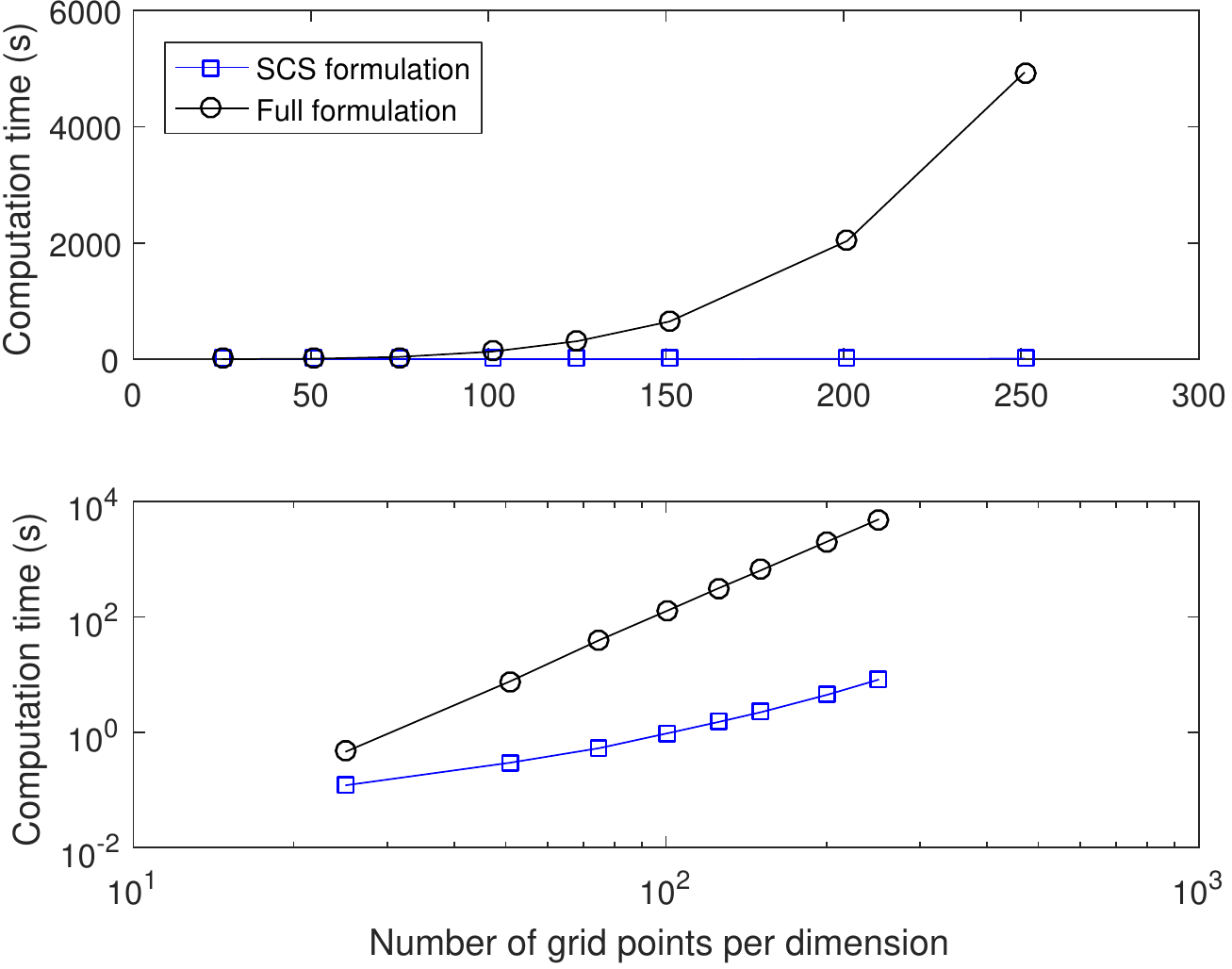}
	\caption{Computation time of the full formulation and the SCS formulation in linear and log scale for the Dubins Car example. In the full formulation, the BRS is computed in a 3D state space, causing the computation time to increase rapidly with the number of grid points per dimension. In contrast, in the SCS formulation, the BRS is computed in a 2D space, and the computation time is negligible in comparison.}
	\label{fig:dubins_time}
\end{figure}

As previously mentioned, the Dubins Car dynamics can be decomposed according to \eqref{eq:dubins_car_decomp}.

For this example, we computed the BRS from the unsafe set representing the set of positions near the origin in both the $\pos_x$ and $\pos_y$ dimensions. More concretely, our unsafe set was defined to be $\targetset = \{(\pos_x, \pos_y, \theta): |\pos_x|, |\pos_y| \le 0.5\}$. Such an unsafe set can be used to model an obstacle that the vehicle must avoid. Given the unsafe set, the interpretation of the BRS $\brs(t)$ is the set of states from which a collision with the obstacle may occur after a duration of $t$.

From $\targetset$, we computed the BRS $\brs(t)$ of time horizon $t=0.5$. The resulting full formulation BRS is shown in Fig. \ref{fig:dubins_compare} as the red surface which appears in the two subplots on the right. 

To compute the BRS using the SCS formulation, note that the unsafe set $\targetset$ can be written as $\targetset = \bp(\targetset_{\scstate_1}) \cap \bp(\targetset_{\scstate_2})$, with

\begin{equation}
\begin{aligned}
\targetset_{\scstate_1} &= \{(\pos_x, \theta): |\pos_x| \le 0.5\} \\
\targetset_{\scstate_2} &= \{(\pos_y, \theta ): |\pos_y| \le 0.5\}
\end{aligned}
\end{equation}

From these lower-dimensional unsafe sets, we computed the lower-dimensional BRSs $\brs_{\xset_1}(t)$ and $\brs_{\xset_2}(t)$, and then constructed the full-dimensional BRS $\brs(t)$ using Theorem \ref{thm:sc_reach}: $\brs(t) = \bp(\brs_{\scstate_1}(t)) \cap \bp(\brs_{\scstate_2}(t))$. The subsystem BRSs and their back projections are shown in magenta and green in the left subplot of Fig. \ref{fig:dubins_compare}. The constructed BRS is shown in the three left subplots of Fig. \ref{fig:dubins_compare} as the black mesh.

In the middle-right plot of Fig. \ref{fig:dubins_compare}, we superimpose the full-dimensional BRS computed using the two methods. We show the comparison of the computation results viewed from several different angles in Fig. \ref{fig:dubins_angles}. The results are indistinguishable. However, when using the SCS formulation, Theorem \ref{thm:sc_reach} allows the computation to be done significantly faster in lower-dimensional subspaces. An additional benefit of the SCS formulation is that in the numerical methods for solving the HJ PDE, the amount of numerical dissipation increases with the number of state dimensions. Thus, computations in lower-dimensional subspaces lead to a slightly more accurate numerical solution.

The computation benefits of using the SCS formulation can be seen from Fig. \ref{fig:dubins_time}. Both subplots show the computation time in seconds versus the number of grid points per dimension in the numerical computation. From the top subplot, one can easily see that the direct computation of the full formulation BRS in 3D becomes very time-consuming as the number of grid points per dimension is increased, while the computation using the SCS formulation hardly takes any time in comparison. The bottom subplot shows the same data, but on a log-log scale for more detail. Directly computing the BRS with 251 grid points per dimension using the full formulation took approximately 80 minutes, while computing the BRS using the SCS formulation is faster by several orders of magnitude: the computation only took approximately 30 seconds! The computations were timed on a desktop computer with an Intel Core i7-2600K processor and 16GB of random-access memory.

\subsection{The 6D Acrobatic Quadrotor}
This example illustrates the ability of the SCS formulation to produce BRSs for high-dimensional systems that would be otherwise intractable to analyze by current HJ-based methods. In \cite{Gillula11}, a 6D Acrobatic Quadrotor model used to perform backflips was simplified into a series of smaller hybrid models due to the intractability of computing a BRS over a 6D state space. Using the new SCS formulation we can accurately compute a BRS for the full 6D system.

The 6D Acrobatic Quadrotor's state is $\state = (\pos_x, \vel_x, \pos_y, \vel_y, \phi, \omega)$; the dynamics are given by \cite{Gillula11}:

\vspace{-0.1in}
\arraycolsep=0pt\def\arraystretch{1.5}
\begin{equation}
\label{eq:quad6D}
\left[
\begin{array}{c}
\dot\pos_x\\
\dot\vel_x\\
\dot\pos_y\\
\dot\vel_y\\
\dot\phi\\
\dot\omega
\end{array}
\right]
=
\left[
\begin{array}{c}
\vel_x\\
\frac{-C^v_D\vel_x}{m}\\
\vel_y\\
\frac{-(mg+C^v_D)\vel_y}{m}\\
\omega\\
\frac{-C^\phi_D\omega}{I_{yy}}
\end{array}
\right]
+
\left[
\begin{array}{cc}
0 & 0\\
\frac{-\sin\phi}{m} & \frac{-\sin\phi}{m}\\
0 & 0\\
\frac{\cos\phi}{m} & \frac{\cos\phi}{m}\\
0 & 0\\
\frac{-l}{I_{yy}} & \frac{l}{I_{yy}}
\end{array}\right]
\left[
\begin{array}{c}
T_1\\
T_2
\end{array}\right]
\end{equation}

\noindent where $\pos_x$, $\pos_y$, and $\phi$ represent the quadrotor's horizontal, vertical, and rotational positions, respectively. Their derivatives represent the velocity with respect to each corresponding positional state. The inputs $T_1$ and $T_2$ represent the thrust exerted on either end of the quadrotor, and the constant system parameters are $m$ for mass, $C_D^v$ for translational drag, $C_D^\phi$ for rotational drag, $g$ for acceleration due to gravity, $l$ for the length from the quadrotor's center to an edge, and $I_{yy}$ for moment of inertia. 

The state partitions of this system are $\spart_1 = (\pos_x, \vel_x), \spart_2 = (\pos_y, \vel_y), \spart_3 = (\phi, \omega)$. Using the SCS formulation, we decompose the full system into the following set of subsystems:
\vspace{-0.1in}
\begin{equation}
\label{eq:quad6Dsc}
\begin{aligned}
&\scstate_1  = \left[
\begin{array}{c}
\spart_1\\
\spart_3
\end{array}
\right] = \left[
\begin{array}{c}
\pos_x\\
\vel_x\\
\phi\\
\omega
\end{array}
\right]
\qquad
\scstate_2 = \left[
\begin{array}{c}
\spart_2\\
\spart_3
\end{array}
\right] = \left[
\begin{array}{c}
\pos_y\\
\vel_y\\
\phi\\
\omega
\end{array}
\right] \\
&\qquad \ctrl_1 = \ctrl_2 = \left[
\begin{array}{c}
T_1 \\
T_2
\end{array}
\right] = \ctrl
\end{aligned}
\end{equation}

For this example we will compute the BRS that describes the set of initial conditions from which the system may enter the unsafe set after a given time period $t$ despite best possible control. We define the unsafe set as a square of length 1m centered at $(\pos_x,\pos_y)=(0,0)$ described by $\targetset = \{(\pos_x,\vel_x, \pos_y, \vel_y, \phi, \omega): |\pos_x|, |\pos_y| \le 1\}$. This can be interpreted as a positional box centered at the origin that must be avoided for all angles and velocities. From the unsafe set, we define $\fc(\state)$ such that $\fc(\state)\le 0 \Leftrightarrow x\in\targetset$. This unsafe set must be decomposed to provide a suitable unsafe set for each subsystem. This is done by letting $\targetset_{\scstate_i},$ $i = 1,2$ be 
\begin{equation}
\begin{aligned}
\targetset_{\scstate_1} &= \{(\pos_x, \vel_x,\phi, \omega): |\pos_x|\le 1\} \\
\targetset_{\scstate_2} &= \{(\pos_y, \vel_y, \phi, \omega): |\pos_y|\le 1\}
\end{aligned}
\end{equation}
The BRS of each 4D subsystem is computed and then combined into the 6D BRS using the SCS formulation. To visually depict the 6D BRS, 3D slices of the BRS along the positional and velocity axes were computed. Fig. \ref{fig:Quad6D_Position} shows a 3D slice in $(\pos_x,\pos_y,\phi)$ space at $\vel_x=\vel_y=1$ m/s, $\omega=0$ rad/s. The dark blue set represents the unsafe set $\targetset$, with the BRS in light blue.
\begin{figure}[H]
	\centering
	\includegraphics[width=.8\linewidth]{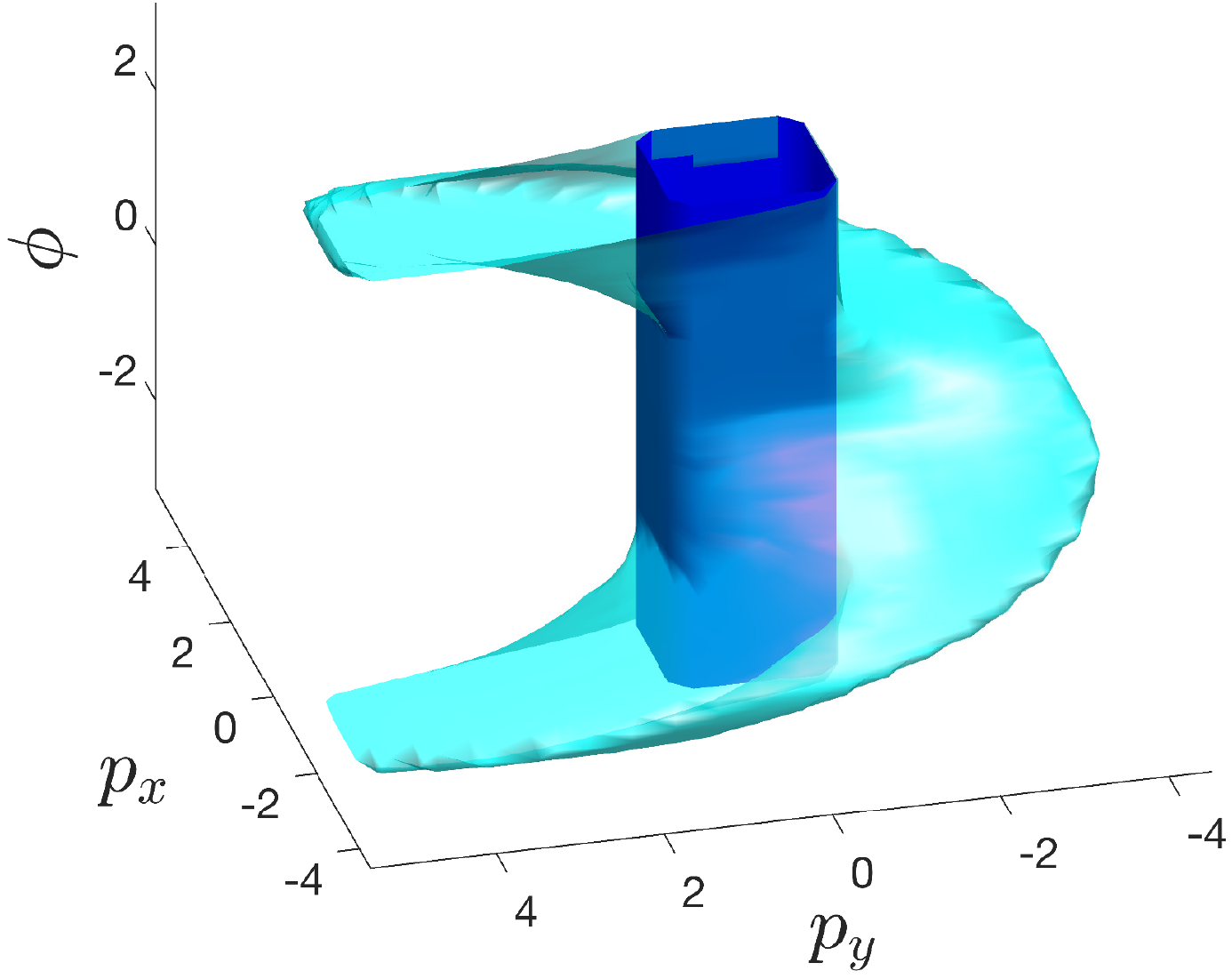}
	\caption{3D slice of the constructed 6D BRS at $\vel_x=\vel_y=1$ m/s, $\omega=0$ rad/s. The unsafe set is in dark blue, with the BRS in light blue.}
	\label{fig:Quad6D_Position}
\end{figure}

In Fig. \ref{fig:Quad6D_Velocity}, 3D slices in $(\vel_x,\vel_y,\omega)$ space are visualized at $\pos_x,\pos_y=1.5$ m, $\phi=1.5$ rad. These colored sets represent the BRS at different points in time. 

\begin{figure}[H]
	\centering
	\includegraphics[width=.7\linewidth]{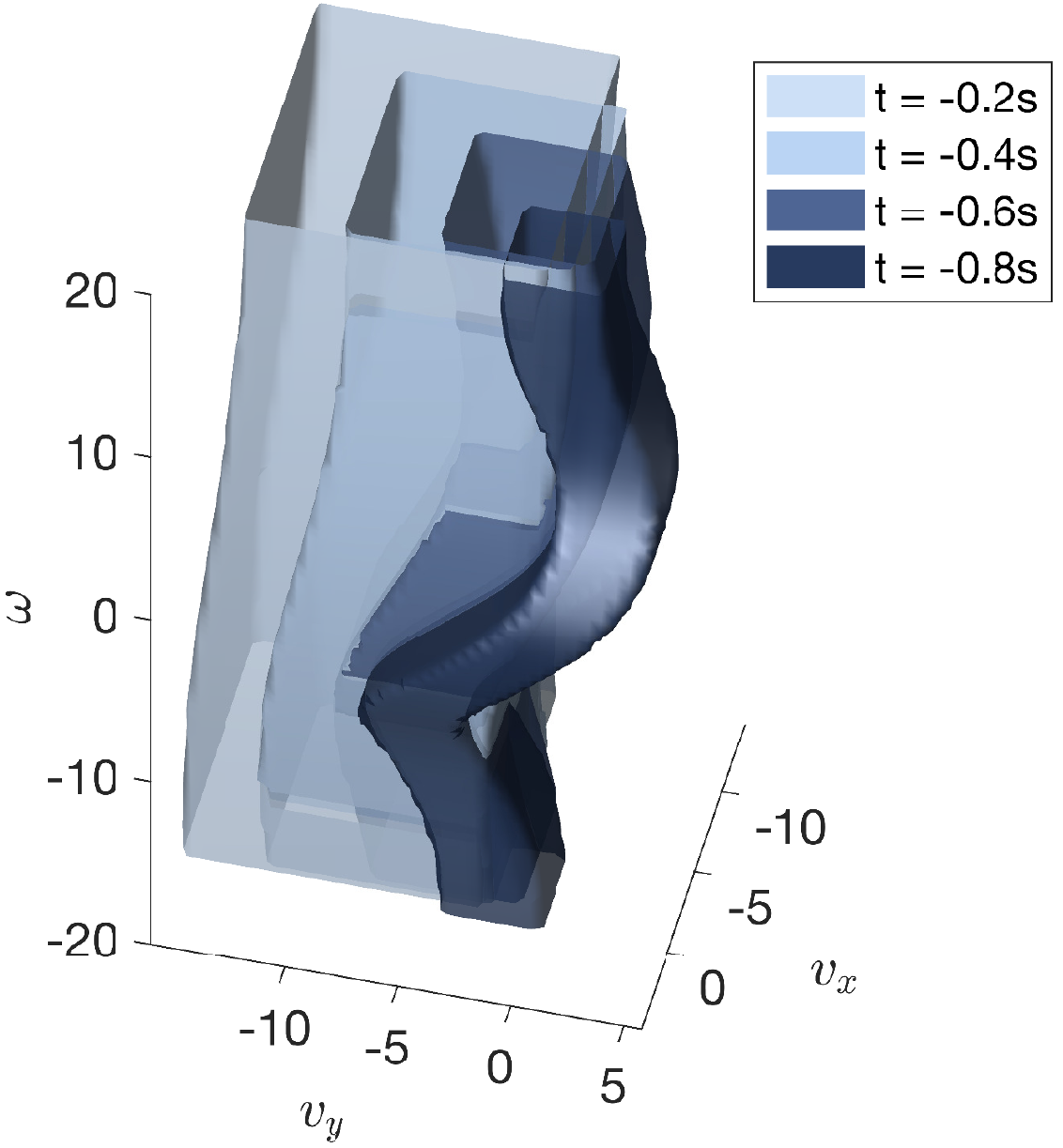}
	\caption{3D slices of the constructed 6D BRS at $\pos_x,\pos_y=1.5$ m, $\phi = 1.5$ rad at different points in time. The sets become darker as $t$ becomes more negative.}
	\label{fig:Quad6D_Velocity}
	\vspace{-.1in}
\end{figure}

\section{Conclusions and Future Work}
The SCS formulation that we proposed for computing BRSs significantly reduces computation burden, and makes many previously intractable computations possible. At the same time, the computation savings do \textit{not} come at the cost of optimality: the full-dimensional BRS can be computed exactly in lower-dimensional subspaces. The construction of the full-dimensional BRS from lower-dimensional BRSs is exact even when the subsystem dynamics are coupled. 

The SCS formulation will be the basis for future system decomposition methods, for which we already have several other preliminary theoretical results. These results include other definitions of BRSs, such as those used for reaching, instead of avoiding, a set; incorporation of disturbances into the problem formulation; and treatment of reachable tubes. In the future, we plan to apply the theory to a larger number of practical systems in these different settings.


\bibliographystyle{IEEEtran}
\bibliography{references}
\end{document}